\documentclass[11pt]{article}

\usepackage[T1]{fontenc}
\usepackage{lmodern}
\usepackage{amsmath,amssymb,amsthm}
\usepackage[margin=1in]{geometry}
\usepackage[hidelinks]{hyperref}

\hypersetup{
  pdftitle={The Maximum of per(I-A) in Odd Order},
  pdfauthor={Yair Lavi}
}

\newtheorem{theorem}{Theorem}[section]
\newtheorem{proposition}[theorem]{Proposition}
\newtheorem{lemma}[theorem]{Lemma}
\newtheorem{corollary}[theorem]{Corollary}
\theoremstyle{remark}
\newtheorem{remark}[theorem]{Remark}

\title{The Maximum of $\operatorname{per}(I-A)$ in Odd Order}
\author{Yair Lavi}
\date{8 August 2026}

\begin{document}
\maketitle

\begin{abstract}
Let $\Omega_n$ denote the set of $n\times n$ doubly stochastic
matrices. Kim and Roush conjectured in 1981 that, for $n=2k+1>1$,

\[
\max_{A\in\Omega_{2k+1}}\operatorname{per}(I-A)=3\cdot 2^{k-2}.
\]

They proposed the following block construction as an extremizer:

\[
A_\star=\frac12(J_3-I_3)\oplus P_2^{\oplus(k-1)},
\qquad
P_2=\begin{pmatrix}0&1\\1&0\end{pmatrix}.
\]

Here $J_3$ is the $3\times3$ all-ones matrix. They did not claim uniqueness.
We fully prove their conjecture and classify equality: the maximizers are
exactly the simultaneous-permutation conjugates of $A_\star$.
\end{abstract}

\noindent\textbf{Keywords.} Permanent; doubly stochastic matrix; functional digraph;
hafnian; extremal matrix problem.

\noindent\textbf{2020 Mathematics Subject Classification.} 15A15, 05C20, 05A05.

\section{Introduction}

For an $n\times n$ matrix $D=(d_{ij})$, its permanent is

\[
\operatorname{per}D=\sum_{\sigma\in S_n}\prod_{i=1}^n d_{i,\sigma(i)}.
\]

We write $\Omega_n$ for the Birkhoff polytope of nonnegative doubly
stochastic $n\times n$ matrices. The problem considered here is to maximize
$\operatorname{per}(I-A)$ over $A\in\Omega_n$. Although $I-A$ generally has
negative off-diagonal entries, stochasticity gives this permanent the positive
functional-digraph expansion proved in Proposition 3.1.

Kim and Roush \cite{kimroush1981} proved that for nonnegative row-stochastic matrices

\[
\max_A\operatorname{per}(I-A)=2^{\lfloor n/2\rfloor}.
\tag{1.1}
\]

Their proof established the functional-digraph identity used below. For even
$n=2m$, the row-stochastic maximum is attained by a fixed-point-free
involution, which is doubly stochastic, and hence the doubly stochastic
maximum is $2^m$. For odd $n=2k+1$, (1.1) gives the upper bound $2^k$.
Kim and Roush exhibited the doubly stochastic matrix

\[
A_\star=\frac12(J_3-I_3)\oplus P_2^{\oplus(k-1)},
\qquad
P_2=\begin{pmatrix}0&1\\1&0\end{pmatrix},
\tag{1.2}
\]

and conjectured that its value $3\cdot2^{k-2}$ is maximal. They proved the
case $n=3$, where the value is $3/2$ \cite[Proposition~5]{kimroush1981}. Thus the open range
in their paper began at $n=5$.

Minc later added the conjecture to his catalogue as Conjecture 35
\cite[p.~133]{minc1987}. In their subsequent survey of Minc's list, Cheon
and Wanless \cite{cheonwanless2005} reported no progress. Troanca
\cite{troanca2008} reproduced the exact construction and stated in 2008 that
no partial results were known to the author. Chen and Cao \cite{chencao2018}
subsequently solved a neighboring problem with
fixed total matrix mass. In their notation, for row- and doubly
substochastic matrices of total mass $s=\sum_{i,j}a_{ij}$, they obtained the
exact common maximum

\[
2^{e/2}\left[1+\left(\frac{s-e}{2}\right)^2\right],
\tag{1.3}
\]

where $e$ is the largest even integer not exceeding $s$, whenever $n$ is
even or $n$ is odd and $s\le n-1$. For odd $n$ and $n-1<s\le n$, they
determined the row-substochastic maximum but left the doubly substochastic
case open \cite[Theorems~3.1, 3.13, and~4.1]{chencao2018}. The endpoint $s=n$ is exactly
the odd doubly stochastic problem. They also gave an independent proof of
the $n=3$ value \cite[Lemma~4.2]{chencao2018}.

Our main result settles the conjecture in full and classifies all equality
cases.

\begin{theorem}[Kim--Roush conjecture]
For every integer $k\ge1$,

\[
\boxed{\max_{A\in\Omega_{2k+1}}\operatorname{per}(I-A)
=3\cdot2^{k-2}.}
\tag{1.4}
\]

The matrix $A_\star$ in (1.2) attains equality. Moreover, equality holds for
$A\in\Omega_{2k+1}$ if and only if

\[
A=PA_\star P^{\mathsf T}
\tag{1.5}
\]

for a permutation matrix $P$. Thus the number of distinct maximizers is
$\binom{2k+1}{3}(2k-3)!!$, with the convention $(-1)!!=1$.

\end{theorem}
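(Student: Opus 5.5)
The plan rests on the functional-digraph expansion (Proposition~3.1), which writes $\operatorname{per}(I-A)$ for row-stochastic $A$ as a nonnegative sum. Concretely,
\[
\operatorname{per}(I-A)=\sum_{f}\Bigl(\prod_{i}a_{i,f(i)}\Bigr)\,w(f),
\]
where $f$ ranges over maps $[n]\to[n]$ and $w(f)$ is a nonnegative weight depending only on the cycle structure of the functional digraph of $f$: each cycle of length $\ell$ contributes a factor $1-(-1)^\ell$, which is $2$ for odd cycles and $0$ for even ones. Loops must be treated with care, since the diagonal of $I-A$ interacts with them. I will take the exact form of $w$ from Proposition~3.1. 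Under this reading, the Kim--Roush row-stochastic bound $2^{\lfloor n/2\rfloor}$ holds because the total weight is a multilinear function of the rows. It is therefore maximized at a vertex, that is, at a $0/1$ functional matrix, where the value is $2^{\#\text{admissible cycles}}$.

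For the doubly stochastic problem I would proceed in three steps.

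\emph{Step 1: reduction by multilinearity in pairs.} The expression is affine in each row separately. Column sums, however, couple the rows, so one cannot simply move to a vertex of $\Omega_n$. Instead I would use Birkhoff's decomposition and a perturbation argument. Take a maximizer $A$ and any doubly stochastic direction, that is, a direction preserving both row and column sums, such as $\pm$ along an alternating cycle in the support. Along such a direction $\operatorname{per}(I-A)$ is a polynomial, and the first- and second-order conditions give structural constraints. The aim is to show that the support graph of $A$ must decompose into blocks of size $2$ carrying $P_2$, plus exactly one block of odd size. The mechanism is as follows. A $2$-cycle component contributes exactly a factor $2$, and the expansion factorizes over direct sums, giving $\operatorname{per}(I-(B\oplus C))=\operatorname{per}(I-B)\operatorname{per}(I-C)$. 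So once a $P_2$ block splits off, we induct on $k$.

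\emph{Step 2: the splitting lemma.} I would aim for a bound of the form
\[
\operatorname{per}(I-A)\le 2\max_{A'\in\Omega_{n-2}}\operatorname{per}(I-A')
\]
whenever $n\ge5$, with equality only if $A$ has a $P_2$ direct summand up to conjugation. The plan is to choose a pair $\{i,j\}$ maximizing $a_{ij}a_{ji}$, or the appropriate weighted quantity, and expand the functional-digraph sum according to where $f(i)$ and $f(j)$ go. Then bound the terms not using the $2$-cycle $(ij)$ via a row-stochastic estimate on the remaining $n-2$ rows, renormalized to restore double stochasticity. I expect this to be the main obstacle. Deleting two rows and columns of a doubly stochastic matrix leaves a doubly substochastic matrix of total mass between $n-4$ and $n-2$. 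This is exactly where the Chen--Cao bound (1.3) for $s\le n-1$ should be invoked: it controls the sub-block by $2^{e/2}\bigl[1+((s-e)/2)^2\bigr]$. The delicate part is then showing that the cross terms, where $f$ maps $i$ or $j$ into the rest, cannot compensate for the defect $1-a_{ij}a_{ji}$. I would try first to prove a convexity inequality in the single parameter $t=a_{ij}$, using the double-stochastic slack on row and column sums. If that fails, I would fall back on a hafnian reformulation, in which odd-cycle weights become matchings, suggested by the keywords.

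\emph{Step 3: base cases and equality.} The base case $n=3$ is Kim--Roush's Proposition~5. I would reprove it with equality analysis: the function is symmetric and concave enough on $\Omega_3$ that the unique maximizer is $\tfrac12(J_3-I_3)$, checked by a Lagrange computation on the $4$-dimensional polytope. Feeding equality through Step~2 at each stage yields $A=PA_\star P^{\mathsf T}$. The count of maximizers is then immediate. Choose the $3$-set carrying the $\tfrac12(J_3-I_3)$ block in $\binom{2k+1}{3}$ ways, and a perfect matching of the remaining $2k-2$ points in $(2k-3)!!$ ways. Distinct choices give distinct matrices, because the supports differ.
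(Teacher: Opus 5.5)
\textbf{There is a genuine gap in your proposal.}

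Your Step 2 is not a lemma on the way to the theorem; it is the theorem. Since $3\cdot2^{k-2}=2\cdot3\cdot2^{k-3}$, the inequality $\operatorname{per}(I-A)\le2\max_{\Omega_{n-2}}\operatorname{per}(I-A')$ is equivalent to the inductive step of (1.4), and the tool you name does not reach it.

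If you expand according to whether $f$ contains the transposition $i\leftrightarrow j$, that part contributes exactly $2a_{ij}a_{ji}\operatorname{per}(I-A'')$. Here $A''$ is the complementary block: doubly substochastic, of odd order $n'=n-2$, with mass $s=n-4+a_{ii}+a_{ij}+a_{ji}+a_{jj}$.

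\begin{itemize}
\item Chen--Cao's bound (1.3) holds in odd order only for $s\le n'-1$. The range $n'-1<s<n'$ contains every case with $a_{ij}a_{ji}>1/4$, and it is precisely the case they left open. Only the endpoint $s=n'$ is supplied by your induction hypothesis.
\item Even where (1.3) does apply, at $s=n'-1$ it gives $2^{k-1}$. This exceeds the order-$(n-2)$ target $3\cdot2^{k-3}$ by a factor $4/3$.
\item You give no estimate at all for the remaining terms, in which $f$ does not contain $i\leftrightarrow j$.
\end{itemize}

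Step 1 has the same problem. It asserts, without a mechanism, that first- and second-order conditions at a maximizer force a $P_2$ block to split off. That is essentially the equality classification you are trying to prove, and feeding equality ``through Step 2'' inherits both gaps.

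You also have the expansion backwards. By (3.5) each cycle of length $\ell$ contributes $1+(-1)^{\ell}$, so even cycles get weight $2$ and odd cycles, loops included, get $0$. With your weights, $A=I$ would give $2^n$ instead of $\operatorname{per}0=0$. Correspondingly, the hafnians in the paper count mutual pairs (even $2$-cycles), not odd cycles.

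\textbf{The missing idea} is a global, non-inductive mass comparison inside the map space of total mass one.

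\begin{itemize}
\item The paper bounds $\operatorname{per}(I-A)=\sum_c2^cm_c\le2^{k-1}(M+m_k)$, so it suffices to show $M+m_k\le\frac32$.
\item This follows from two facts. First, $Q_3\le1-M$, because $(3,2,\ldots,2)$-permutations are disjoint from even-cycle maps. Second, the triangle-charge inequality $m_k-Q_3\le\frac12$.
\item The column sums enter in the triangle-charge inequality. Averaging $A$ with $A^{\mathsf T}$ does not decrease $m_k-Q_3$, since the local loss $D_T$ is a sum of squares.
\item For symmetric $X$, releasing the terminal row of a path $v\to u\to w$ gives $2m_k\le B=m_k+3Q_3+O$ with $O\le1-m_k-Q_3$.
\end{itemize}

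For the equality case, equality forces $Q_3=1-M$. The resulting support dichotomy gives, after relabelling, $A=A_1\oplus P_2^{\oplus(k-1)}$ with $A_1=xC+(1-x)C^{\mathsf T}$. Then $\operatorname{per}(I_3-A_1)=6x(1-x)$ gives $x=\frac12$. This argument covers $k=1$ as well, so no separate Lagrange analysis on $\Omega_3$ is needed.
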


Here is the proof in outline. Independently for each $i\in[n]$, choose
$F(i)=j$ with probability $a_{ij}$. These choices define a random map
$F:[n]\to[n]$. Let $m_c$ be the total product mass of maps whose functional
digraphs have only even cycles and exactly $c$ cycles, and put
$M=\sum_c m_c$. The Kim--Roush expansion and a layer estimate give

\[
\operatorname{per}(I-A)=\sum_{c=1}^k2^c m_c
\le 2^{k-1}(M+m_k).
\tag{1.6}
\]

Let $Q_3$ be the mass of permutation maps with cycle type
$(3,2,\ldots,2)$. The key estimate is the sharp triangle-charge inequality

\[
m_k-Q_3\le\frac12.
\tag{1.7}
\]

Every map counted by $Q_3$ has an odd cycle, so $Q_3\le1-M$. Consequently
(1.7) implies $M+m_k\le3/2$, which inserted into (1.6) proves the upper
bound in (1.4).

The proof of (1.7) has two exact ingredients. First, averaging a matrix with
its transpose does not decrease the difference $m_k-Q_3$; the local loss is
a sum of three squares. Second, for symmetric weights, releasing the terminal
row of an ordered spanning path on three vertices yields a mass $B$ satisfying
$2m_k\le B$ and

\[
B=m_k+3Q_3+O,
\]

where $O$ is a disjoint residual-map mass. Since
$O\le1-m_k-Q_3$, inequality (1.7) follows. The factor $3$ is the number of
ways to open a directed triangle into a directed path.

Sections 2--5 develop the map expansion and prove the triangle-charge
inequality. Section 6 establishes the maximum, and Section 7 classifies
equality.

\section{Maps, masses, and matchings}

Fix $n=2k+1$ with $k\ge1$, and let $A=(a_{ij})\in\Omega_n$. Write
$[n]=\{1,\ldots,n\}$. For a map $f:[n]\to[n]$, define its product mass by

\[
\mu_A(f)=\prod_{i=1}^n a_{i,f(i)}.
\tag{2.1}
\]

The row sums of $A$ are one, and therefore

\[
\sum_{f:[n]\to[n]}\mu_A(f)=\prod_{i=1}^n\sum_{j=1}^n a_{ij}=1.
\tag{2.2}
\]

The directed graph of $f$ has the edge $i\to f(i)$ from every vertex.
Each weak component contains one directed cycle. Let $c(f)$ denote the
number of these directed cycles.

For $1\le c\le k$, let $\mathcal E_c$ be the maps all of whose directed
cycles have even length and for which $c(f)=c$. Set

\[
m_c=m_c(A)=\sum_{f\in\mathcal E_c}\mu_A(f),
\qquad
M=M(A)=\sum_{c=1}^k m_c.
\tag{2.3}
\]

The families $\mathcal E_c$ are disjoint subsets of all maps, so $0\le M\le1$.

For a finite set $U$ of even cardinality, let $\operatorname{PM}(U)$ denote
the set of perfect matchings of $U$. If
$W=(w_{ij})_{i,j\in U}$ is symmetric, write

\[
\operatorname{haf}W
=\sum_{\nu\in\operatorname{PM}(U)}
\prod_{\{r,s\}\in\nu}w_{rs}.
\tag{2.4}
\]

We use the convention that the hafnian of the empty matrix equals one. Put

\[
C(A)=A\circ A^{\mathsf T},
\qquad C(A)_{ij}=a_{ij}a_{ji},
\tag{2.5}
\]

where $\circ$ denotes the entrywise (Hadamard) product. For
$U\subseteq[n]$, let $W[U]$ denote the principal submatrix of $W$ indexed by
$U$. Whenever $[n]\setminus S$ has even cardinality, define

\[
H_S(A)=\operatorname{haf}\bigl(C(A)[[n]\setminus S]\bigr).
\tag{2.6}
\]

Thus $H_S(A)$ is the total weight of mutual-pair matchings outside $S$.

\section{The positive expansion and the top layer}

We first give a self-contained proof of the expansion from \cite{kimroush1981}.

\begin{proposition}[even-cycle expansion]
If $A$ is nonnegative and
row-stochastic, then

\[
\operatorname{per}(I-A)=
\sum_{\substack{f:[n]\to[n]\\
\text{every cycle of }f\text{ is even}}}
2^{c(f)}\mu_A(f).
\tag{3.1}
\]

In particular, for $n=2k+1$,

\[
\operatorname{per}(I-A)=\sum_{c=1}^k2^c m_c.
\tag{3.2}
\]

\end{proposition}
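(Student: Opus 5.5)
We expand $\operatorname{per}(I-A)$ over permutations and group terms by their fixed-point set, then convert each term into a sum over maps using row-stochasticity.

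\emph{Step 1: expand over permutations.} Write $I-A=(\delta_{ij}-a_{ij})$. Expanding $\prod_i(\delta_{i\sigma(i)}-a_{i\sigma(i)})$, each fixed point $i$ of $\sigma$ contributes the factor $1-a_{ii}$. Each non-fixed point contributes $-a_{i\sigma(i)}$. Since $A$ is row-stochastic, $1-a_{ii}=\sum_{j\ne i}a_{ij}$. This gives
\[
\operatorname{per}(I-A)=\sum_{\sigma}(-1)^{|\operatorname{supp}\sigma|}\prod_{i\notin\operatorname{supp}\sigma}\Bigl(\sum_{j\neq i}a_{ij}\Bigr)\prod_{i\in\operatorname{supp}\sigma}a_{i\sigma(i)}.
\]
Expanding the remaining products, every term is $\pm\mu_A(f)$ for a map $f$. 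This map has no fixed points, and it agrees with $\sigma$ on $\operatorname{supp}\sigma$. So $\operatorname{per}(I-A)=\sum_f \mu_A(f)\,w(f)$, where the sum runs over fixed-point-free maps $f$ and
\[
w(f)=\sum_{\sigma}(-1)^{|\operatorname{supp}\sigma|}.
\]
In $w(f)$, $\sigma$ ranges over permutations with $f|_{\operatorname{supp}\sigma}=\sigma|_{\operatorname{supp}\sigma}$.

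\emph{Step 2: compute the weight $w(f)$.} Such a $\sigma$ is determined by its support. The support must be a union of directed cycles of $f$: it must be closed under $f$ and $f$ must be injective on it. In a functional digraph the only $f$-invariant sets on which $f$ is a bijection are unions of cycles. Conversely, any union of cycles of $f$ gives a valid $\sigma$, extended by the identity off the support. Let $C_1,\dots,C_{c(f)}$ be the cycles of $f$. Then
\[
w(f)=\prod_{t=1}^{c(f)}\bigl(1+(-1)^{|C_t|}\bigr).
\]
Since $f$ has no fixed points, no cycle has length $1$. The weight equals $2^{c(f)}$ if every cycle is even, and $0$ otherwise. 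This proves (3.1).

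Maps with a fixed point have an odd cycle of length $1$. So the restriction to fixed-point-free maps matches the condition ``every cycle even'' in (3.1).

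\emph{Step 3: specialize to $n=2k+1$.} When $n=2k+1$, a map with only even cycles has $c(f)\le k$, because its cycles are disjoint subsets of $[n]$ of size at least $2$. Grouping the terms of (3.1) by $c(f)$ gives (3.2) by the definition (2.3).

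The only delicate point is the bookkeeping in Step 2. It amounts to checking that each pair (permutation $\sigma$, choice of off-diagonal entries at the fixed points of $\sigma$) corresponds bijectively to a pair (map $f$, union of cycles of $f$).
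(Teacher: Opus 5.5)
Your proof is correct and follows essentially the same route as the paper: expand the permanent, use the row sums to turn products into map masses, and identify the coefficient of $\mu_A(f)$ as $\prod_{\gamma}(1+(-1)^{|\gamma|})$ over the cycles $\gamma$ of $f$. The only cosmetic difference is the handling of the diagonal. You absorb each diagonal factor into $1-a_{ii}=\sum_{j\ne i}a_{ij}$, so fixed points never arise. The paper instead expands by the set of rows where $-A$ is chosen, lets $1$-cycles appear, and kills them through the factor $1+(-1)^1=0$.
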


\begin{proof}
For $S\subseteq[n]$, let $\mathfrak S(S)$ denote the set of
permutations of $S$. Expand the permanent according to the rows in which the
term from $-A$ is selected. This gives

\[
\operatorname{per}(I-A)
=\sum_{S\subseteq[n]}(-1)^{|S|}
\sum_{\pi\in\mathfrak S(S)}\prod_{i\in S}a_{i,\pi(i)},
\tag{3.3}
\]

where the empty product is one. For a fixed pair $(S,\pi)$,
row-stochasticity gives

\[
\prod_{i\in S}a_{i,\pi(i)}
=\sum_{\substack{f:[n]\to[n]\\f|_S=\pi}}\mu_A(f).
\tag{3.4}
\]

After interchanging the finite sums, fix a map $f$. The subsets $S$ for
which $f|_S$ is a permutation are exactly the unions of directed cycles of
$f$. Hence the coefficient of $\mu_A(f)$ is

\[
\prod_{\gamma\text{ a cycle of }f}
\left(1+(-1)^{|\gamma|}\right).
\tag{3.5}
\]

This coefficient vanishes if $f$ has an odd cycle and equals $2^{c(f)}$
otherwise. Since every finite map has a cycle, and $c$ even cycles use at
least $2c$ vertices, in odd order $1\le c\le k$. This proves both formulas.
\end{proof}

The next reduction isolates the only layer requiring sharp control.

\begin{lemma}[layer reduction]
For $A\in\Omega_{2k+1}$,

\[
\operatorname{per}(I-A)\le2^{k-1}(M+m_k).
\tag{3.6}
\]

\end{lemma}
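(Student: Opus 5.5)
This is a termwise comparison starting from the expansion (3.2) of Proposition 3.1,
\[
\operatorname{per}(I-A)=\sum_{c=1}^k 2^c m_c .
\]
For every $c\le k-1$ I will bound $2^c\le 2^{k-1}$. For the top layer I will write $2^k m_k = 2^{k-1}m_k + 2^{k-1}m_k$. Since all masses $m_c$ are nonnegative (the entries of $A$ are nonnegative), this gives
\[
\operatorname{per}(I-A)\le 2^{k-1}\sum_{c=1}^{k-1} m_c + 2^{k-1}m_k + 2^{k-1}m_k
=2^{k-1}\Bigl(\sum_{c=1}^{k} m_c\Bigr)+2^{k-1}m_k .
\]
By the definition (2.3), the parenthesized sum is exactly $M$, so the right side is $2^{k-1}(M+m_k)$. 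That is (3.6).

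The only points to check are these. First, the range $1\le c\le k$ in (3.2) is all that appears; this was established in Proposition 3.1, since $c$ even cycles need at least $2c$ vertices and $n=2k+1$. Second, the inequality $2^c\le 2^{k-1}$ holds for all $c\le k-1$. Third, nonnegativity of each $m_c$ is what allows raising each coefficient. Double stochasticity is not needed here beyond row-stochasticity, which Proposition 3.1 already uses.

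I expect no real obstacle. The lemma is simply bookkeeping that isolates the top layer $m_k$ as the only place where the coefficient $2^k$ exceeds $2^{k-1}$.
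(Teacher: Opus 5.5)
Your proposal is correct and is the same argument as the paper's: bound $2^c\le 2^{k-1}$ on the lower layers using $m_c\ge 0$, and keep the top-layer coefficient $2^k=2^{k-1}+2^{k-1}$ to obtain $2^{k-1}(M+m_k)$. The case $k=1$, where the lower sum is empty and (3.6) is an identity, is handled by your argument without change.
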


\begin{proof}
In (3.2), bound $2^c\le2^{k-1}$ for $c\le k-1$, while retaining
the coefficient $2^k$ of the top layer. Thus

\[
\operatorname{per}(I-A)
\le2^{k-1}(M-m_k)+2^k m_k
=2^{k-1}(M+m_k).
\]

For $k=1$ the first sum is empty and the displayed relation is an identity.
\end{proof}

Call a map in $\mathcal E_k$ a \textbf{top map}. Such a map has a particularly
rigid form.

\begin{lemma}[top-layer structure]
One has

\[
m_k
=\sum_{v=1}^n(1-a_{vv})H_{\{v\}}(A)
=\sum_{\substack{v,u,w\in[n]\\v,u,w\text{ distinct}}}
a_{vu}a_{uw}a_{wu}H_{\{v,u,w\}}(A).
\tag{3.7}
\]

\end{lemma}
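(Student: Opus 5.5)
We first pin down the shape of a top map, then sum its mass by conditioning on the leftover vertex. Let $f\in\mathcal E_k$ with $n=2k+1$. It has $k$ directed cycles, all of even length, and each has length at least $2$. These cycles use at least $2k$ vertices, and there are only $2k+1$ vertices. Hence every cycle is a mutual pair $\{r,s\}$ with $f(r)=s$, $f(s)=r$, and exactly one vertex $v$ lies on no cycle. This $v$ must map into a cycle vertex, so $f(v)\neq v$. Conversely, pick any $v$, any perfect matching $\nu$ of $[n]\setminus\{v\}$ realized as $2$-cycles, and any $f(v)=u\ne v$. The result is a map with exactly $k$ cycles, all even, so it lies in $\mathcal E_k$.

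Two different choices of $(v,\nu,u)$ give different maps, because $v$ and $\nu$ are recovered from the cycle structure of $f$. So $\mathcal E_k$ is in bijection with these triples, and
\[
\mu_A(f)=a_{vu}\prod_{\{r,s\}\in\nu}a_{rs}a_{sr}.
\]
Fix $v$ and sum over $u\ne v$ and over $\nu$. The sums factor. The $u$-sum equals $\sum_{u\ne v}a_{vu}=1-a_{vv}$ by row-stochasticity. The $\nu$-sum equals $\operatorname{haf}(C(A)[[n]\setminus\{v\}])=H_{\{v\}}(A)$ by (2.4)--(2.6). Summing over $v$ gives the first expression in (3.7).

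For the second expression, refine the same bijection instead of summing out $u$. Given $(v,\nu,u)$, let $w$ be the partner of $u$ in $\nu$, and let $\nu'=\nu\setminus\{\{u,w\}\}$. Then $\nu'$ is a perfect matching of $[n]\setminus\{v,u,w\}$, and the mass factors as
\[
\mu_A(f)=a_{vu}\,a_{uw}a_{wu}\prod_{\{r,s\}\in\nu'}a_{rs}a_{sr}.
\]
The correspondence $(v,\nu,u)\leftrightarrow(v,u,w,\nu')$, with $v,u,w$ distinct, is a bijection. The ordered triple matters, since $u$ is the vertex hit by $v$ and $w$ is its partner. Summing over $\nu'$ gives $H_{\{v,u,w\}}(A)$, which yields the triple sum.

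When $k=1$, the set $[n]\setminus\{v,u,w\}$ is empty and the empty hafnian is $1$, so the formula still holds. The main point needing care is the counting argument that forces every cycle of a top map to be a $2$-cycle and leaves exactly one tail vertex. We will also check that the decomposition is injective: $v$ is the unique non-cyclic vertex and $\nu$ is the set of $2$-cycles, so no map is counted twice.
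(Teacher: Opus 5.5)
Your proof is correct and follows essentially the same route as the paper. You use the same counting argument to force $k$ transpositions plus a single tail vertex $v$ with $f(v)=u\ne v$, you sum out $u$ by row-stochasticity for the first formula, and you refine by the mate $w$ of $u$ for the second. Your explicit converse and injectivity check only spell out the bijection a bit more fully than the paper does.
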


\begin{proof}
A map in $\mathcal E_k$ has $k$ even cycles on $2k+1$ vertices.
It must therefore have $k$ transpositions covering $2k$ vertices and one
vertex $v$ not lying on a directed cycle (hence one nonperiodic vertex). The
vertex $v$ maps to some $u\ne v$, while the remaining vertices form a perfect
matching of mutual pairs. For fixed $v$ and fixed matching, the sum over the
target of $v$ is
$\sum_{u\ne v}a_{vu}=1-a_{vv}$, giving the first formula.

For the second formula, let $w$ be the mate of the chosen target $u$.
The distinguished part of the map has weight
$a_{vu}a_{uw}a_{wu}$, and the remaining mutual pairs contribute
$H_{\{v,u,w\}}(A)$. The tail vertex, its target, and the target's mate are
uniquely determined by the map.
\end{proof}

\section{Triangle charge and transpose symmetrization}

Let $\mathcal Q_3$ be the set of permutation maps whose cycle type is
$(3,2,\ldots,2)$: one directed $3$-cycle and $k-1$ transpositions. Define

\[
Q_3=Q_3(A)=\sum_{f\in\mathcal Q_3}\mu_A(f).
\tag{4.1}
\]

Fix a three-element set $T=\{i,j,\ell\}$. The vertices outside $T$ must be
paired into transpositions, whose total weight is $H_T(A)$, while the
directed $3$-cycle on $T$ has exactly two orientations. Therefore

\[
Q_3(A)=\sum_{|T|=3}
\left(a_{ij}a_{j\ell}a_{\ell i}
+a_{i\ell}a_{\ell j}a_{ji}\right)H_T(A).
\tag{4.2}
\]

Each map in $\mathcal Q_3$ contains an odd cycle, whereas all maps counted by
$M$ have only even cycles. These map families are disjoint, so (2.2) gives

\[
Q_3\le1-M.
\tag{4.3}
\]

We call $m_k-Q_3$ the \textbf{triangle charge}. To express it locally, group
(3.7) and (4.2) by their distinguished three-element set and define

\[
K_T(A)=
\sum_{\substack{v,u,w\in T\\v,u,w\text{ distinct}}}
a_{vu}a_{uw}a_{wu}
-\frac13
\sum_{\substack{v,u,w\in T\\v,u,w\text{ distinct}}}
a_{vu}a_{uw}a_{wv}.
\tag{4.4}
\]

The normalization $1/3$ removes a simple overcount; this correction is needed
for identity (4.5). In the first sum, the ordered triple $(v,u,w)$ records a
top map's nonperiodic vertex $v$, its target $u$, and the mate $w$ of $u$. In
the second sum,
$a_{vu}a_{uw}a_{wv}$ is the weight of the directed triangle
$v\to u\to w\to v$. Each directed triangle on $T$ occurs three times in
that sum, once for each cyclic choice of the first vertex $v$. Thus the
second sum, after multiplication by $1/3$, is exactly the sum of the two
orientation weights in (4.2). It follows that

\[
m_k(A)-Q_3(A)=\sum_{|T|=3}K_T(A)H_T(A).
\tag{4.5}
\]

Although writing $T=\{i,j,\ell\}$ in (4.2) requires an arbitrary ordering,
the sum of the two orientation weights in (4.2), the quantity $K_T(A)$, and
identity (4.5) depend only on the set $T$.

For nonnegative $x,y,z$, put

\[
\begin{aligned}
\Phi(x,y,z)
&=xy(x+y)+yz(y+z)+zx(z+x)-2xyz\\
&=(x+y)(y+z)(z+x)-4xyz\\
&=x(y-z)^2+y(z-x)^2+z(x-y)^2+4xyz.
\end{aligned}
\tag{4.6}
\]

In particular, $\Phi(x,y,z)\ge0$.

\begin{lemma}[local transpose identity]
Let
$X=(A+A^{\mathsf T})/2$. Orient $T=\{i,j,\ell\}$ cyclically as
$i\to j\to\ell\to i$, and write

\[
\begin{array}{lll}
a_{ij}=x+p,&a_{ji}=x-p,&a_{j\ell}=y+q,\\
a_{\ell j}=y-q,&a_{\ell i}=z+r,&a_{i\ell}=z-r.
\end{array}
\tag{4.7}
\]

Then

\[
\frac{K_T(A)+K_T(A^{\mathsf T})}{2}
=\Phi(x,y,z)-D_T,
\tag{4.8}
\]

where

\[
D_T=y(p+r)^2+z(p+q)^2+x(q+r)^2\ge0.
\tag{4.9}
\]

\end{lemma}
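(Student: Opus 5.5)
The identity (4.8) is a finite polynomial identity in the six entries of $A$ on $T$, so the plan is to expand $K_T(A)$ explicitly in the coordinates (4.7). Then observe that passing to $A^{\mathsf T}$ is the substitution $(p,q,r)\mapsto(-p,-q,-r)$ with $x,y,z$ fixed. Hence $\tfrac12\bigl(K_T(A)+K_T(A^{\mathsf T})\bigr)$ is exactly the part of $K_T(A)$ that is even in $(p,q,r)$, and we only need to compute that part and compare it with $\Phi-D_T$.

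\textbf{The first sum in (4.4).} Group the first sum by the mutual pair $\{u,w\}$, with the remaining vertex of $T$ as the tail $v$.
\begin{itemize}
\item The pair $\{i,j\}$ has weight $a_{ij}a_{ji}=x^2-p^2$. Its tail is $\ell$, whose edge goes to $i$ or $j$, giving the factor $a_{\ell i}+a_{\ell j}=(z+r)+(y-q)$.
\item The pair $\{j,\ell\}$ contributes $(y^2-q^2)\bigl((x+p)+(z-r)\bigr)$.
\item The pair $\{\ell,i\}$ contributes $(z^2-r^2)\bigl((y+q)+(x-p)\bigr)$.
\end{itemize}
The linear terms $r-q$, $p-r$, $q-p$ multiply even expressions, so they are odd and drop out on averaging. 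The even part of the first sum is therefore
\[
(y+z)(x^2-p^2)+(z+x)(y^2-q^2)+(x+y)(z^2-r^2).
\]

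\textbf{The second sum in (4.4).} As noted after (4.4), one third of the second sum equals the sum of the two orientation weights, $(x+p)(y+q)(z+r)+(x-p)(y-q)(z-r)$. This expression is already even, and it equals $2(xyz+xqr+ypr+zpq)$.

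\textbf{Comparison with (4.8).} Subtracting, the average equals
\[
\bigl[(y+z)x^2+(z+x)y^2+(x+y)z^2-2xyz\bigr]
-\bigl[(y+z)p^2+(z+x)q^2+(x+y)r^2+2xqr+2ypr+2zpq\bigr].
\]
The first bracket is the first line of (4.6), so it equals $\Phi(x,y,z)$. Expanding (4.9) gives
\[
D_T=p^2(y+z)+q^2(z+x)+r^2(x+y)+2ypr+2zpq+2xqr,
\]
which is the second bracket. This proves (4.8).

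\textbf{Nonnegativity of $D_T$.} Each of $x,y,z$ is an average of two nonnegative entries of $A$, for example $x=(a_{ij}+a_{ji})/2$. So $x,y,z\ge0$, and $D_T$ is a sum of nonnegative multiples of squares.

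The only step needing care is the bookkeeping in the first sum. One must match each ordered triple $(v,u,w)$ to the correct pair weight and tail edge, and confirm that the pair $\{u,w\}$ is counted once for each of the two choices of target $u$ (the two edges $v\to u$). No other obstacle is expected, since everything else is a direct expansion.
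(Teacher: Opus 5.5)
Your proposal is correct and takes essentially the same approach as the paper: substitute the coordinates (4.7) into (4.4), note that transposition is the sign flip $(p,q,r)\mapsto(-p,-q,-r)$, and keep the even part, which equals $\Phi(x,y,z)-D_T$. Grouping the first sum by mutual pair and writing out the expansions supplies the bookkeeping that the paper leaves to the reader.
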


\begin{proof}
Substitute the six expressions in (4.7) into (4.4), add the
expression obtained after changing $(p,q,r)$ to $(-p,-q,-r)$, and collect
the even terms. The terms independent of $(p,q,r)$ give (4.6), and the
remaining even terms are the negative of (4.9).
\end{proof}

For the variables attached to $T$ in (4.7), abbreviate
$\Phi_T=\Phi(x,y,z)$.

\begin{theorem}[transpose symmetrization]
For every
$A\in\Omega_{2k+1}$ and $X=(A+A^{\mathsf T})/2$,

\[
m_k(A)-Q_3(A)\le m_k(X)-Q_3(X).
\tag{4.10}
\]

\end{theorem}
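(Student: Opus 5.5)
The plan is to compare the two sides of (4.10) term by term through the local decomposition (4.5), after first replacing the charge of $A$ by the average of the charges of $A$ and $A^{\mathsf T}$.

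\emph{Step 1: the charge is transpose-invariant.} $A^{\mathsf T}\in\Omega_{2k+1}$ because $A$ is doubly stochastic. Since $C(A^{\mathsf T})=A^{\mathsf T}\circ A=C(A)$, we have $H_S(A^{\mathsf T})=H_S(A)$ for every admissible $S$. The first formula in (3.7) expresses $m_k$ through the diagonal entries $a_{vv}$ and the hafnians $H_{\{v\}}$ only, and both are unchanged under transposition. Hence $m_k(A^{\mathsf T})=m_k(A)$. This is where column-stochasticity enters: (3.7) applies to $A^{\mathsf T}$ only because $A^{\mathsf T}$ is row-stochastic. For $Q_3$, transposition reverses the two orientations of each triangle in (4.2) and leaves $H_T$ fixed, so $Q_3(A^{\mathsf T})=Q_3(A)$. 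Therefore
\[
m_k(A)-Q_3(A)=\tfrac12\bigl[(m_k-Q_3)(A)+(m_k-Q_3)(A^{\mathsf T})\bigr].
\]

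\emph{Step 2: apply the local identity.} Expanding each term by (4.5), and using $H_T(A^{\mathsf T})=H_T(A)$, Lemma (local transpose identity) gives
\[
m_k(A)-Q_3(A)=\sum_{|T|=3}\frac{K_T(A)+K_T(A^{\mathsf T})}{2}\,H_T(A)
=\sum_{|T|=3}(\Phi_T-D_T)H_T(A)\le\sum_{|T|=3}\Phi_T H_T(A).
\]
The inequality holds because $D_T\ge0$ and $H_T(A)\ge0$, the latter since the hafnian of a nonnegative matrix is nonnegative.

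\emph{Step 3: evaluate at $X$.} The matrix $X$ lies in $\Omega_{2k+1}$ and is symmetric. In the parametrization (4.7) for $X$, the local variables are the same $x,y,z$ and $p=q=r=0$, so $D_T=0$. Moreover $K_T(X)=K_T(X^{\mathsf T})$, so the lemma yields $K_T(X)=\Phi_T$. By (4.5),
\[
m_k(X)-Q_3(X)=\sum_T\Phi_T H_T(X).
\]

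\emph{Step 4: compare the hafnians.} Entrywise,
\[
C(X)_{ij}=\Bigl(\tfrac{a_{ij}+a_{ji}}2\Bigr)^2\ge a_{ij}a_{ji}=C(A)_{ij}
\]
by AM--GM. In the notation of (4.7) this reads $x^2\ge x^2-p^2$. The hafnian is a polynomial with nonnegative coefficients in nonnegative entries, so it is monotone, and $H_T(X)\ge H_T(A)$. Since $\Phi_T\ge0$ by (4.6), we get $\sum_T\Phi_T H_T(A)\le\sum_T\Phi_T H_T(X)$, and (4.10) follows.

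The main obstacle is Step 1. The invariance $m_k(A^{\mathsf T})=m_k(A)$ is not visible from the second formula of (3.7), which is not transpose-symmetric term by term. It depends on using the first formula together with the double stochasticity of $A$.
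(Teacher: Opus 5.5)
Your proof is correct and follows the paper's argument. Both use transpose invariance of $m_k$ and $Q_3$ via the first formula in (3.7), average $K_T$ over $A$ and $A^{\mathsf T}$, apply Lemma 4.1, discard $D_T\ge0$, and bound $a_{rs}a_{sr}\le x_{rs}^2$ using $\Phi_T\ge0$. The only difference is packaging: you drop $D_T H_T(A)\ge0$ first and then compare whole hafnians by monotonicity. This avoids the paper's term-by-term sign case split over matchings, which is redundant anyway since $\Phi_T\ge0$ and $\prod(x_{rs}^2-\kappa_{rs}^2)\ge0$.
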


\begin{proof}
Here the column-sum condition enters the upper-bound argument:
because $A$ is doubly stochastic, $A^{\mathsf T}$ is row-stochastic and
$X$ is symmetric and doubly stochastic. The quantities $m_k$ and $Q_3$ are
transpose invariant. For $m_k$, apply the first formula in (3.7) to both
$A$ and $A^{\mathsf T}$ and use $C(A)=C(A^{\mathsf T})$; for $Q_3$,
transposition exchanges the two orientations in (4.2). Moreover
$H_T(A)=H_T(A^{\mathsf T})$. We may therefore replace $K_T(A)$ in (4.5)
by its transpose average.

For an edge $\{r,s\}$ outside $T$, write

\[
x_{rs}=\frac{a_{rs}+a_{sr}}2,
\qquad
\kappa_{rs}=\frac{a_{rs}-a_{sr}}2.
\]

Then

\[
a_{rs}a_{sr}=x_{rs}^2-\kappa_{rs}^2,
\qquad
0\le x_{rs}^2-\kappa_{rs}^2\le x_{rs}^2.
\tag{4.11}
\]

Expanding the hafnian in (4.5) over perfect matchings and using Lemma 4.1
expresses the charge as a sum of terms

\[
(\Phi_T-D_T)
\prod_{\{r,s\}\in\nu}(x_{rs}^2-\kappa_{rs}^2).
\tag{4.12}
\]

The following sign distinction is essential. If $\Phi_T-D_T\le0$, the term
in (4.12) is nonpositive and hence at most
$\Phi_T\prod_{\{r,s\}\in\nu}x_{rs}^2$. If $\Phi_T-D_T>0$, first discard
$D_T$ and then use (4.11), obtaining the same upper bound. Summing over
triples and outside perfect matchings gives

\[
m_k(A)-Q_3(A)
\le\sum_{|T|=3}\Phi_T H_T(X)
=m_k(X)-Q_3(X),
\]

where the last equality is (4.5) specialized to the symmetric matrix $X$.
\end{proof}

\begin{remark}[why the transpose average is necessary]
The unaveraged
local inequality suggested by (4.8) is false. For example, on a cyclically
oriented triple take

\[
(a_{12},a_{21},a_{13},a_{31},a_{23},a_{32})
=\left(0,\frac12,\frac12,\frac12,\frac12,0\right).
\tag{4.13}
\]

The ungrouped comparison fails by exactly $1/8$. These six entries have row
and column subsums at most one and extend to a doubly stochastic matrix.
The odd-in-skew remainder cancels only after pairing $A$ with
$A^{\mathsf T}$. Thus the transpose invariance used in Theorem 4.2 is a
structural part of the proof, not a cosmetic symmetrization.

\end{remark}

\section{The symmetric charge inequality}

We now prove the sharp estimate for a symmetric doubly stochastic matrix
$X=(x_{ij})$ of order $2k+1$. All weights below are entries of $X$.

For distinct vertices $v,u,w$ and a perfect matching $\nu$ of the other
$2k-2$ vertices, define the released-path weight

\[
h(v,u,w;\nu)
=x_{vu}x_{uw}\prod_{\{a,b\}\in\nu}x_{ab}^2.
\tag{5.1}
\]

It fixes the choices

\[
v\longmapsto u,
\qquad u\longmapsto w,
\qquad a\longleftrightarrow b\quad(\{a,b\}\in\nu),
\tag{5.2}
\]

and releases the terminal row $w$. Indeed, summing $w\mapsto t$ over all
$t$ multiplies (5.1) by $\sum_t x_{wt}=1$. Let $B$ be the total
released-path mass:

\[
B=\sum_{\substack{v,u,w\text{ distinct}\\
\nu\in\operatorname{PM}([n]\setminus\{v,u,w\})}}
h(v,u,w;\nu).
\tag{5.3}
\]

\begin{lemma}[released mass dominates the top layer]
One has

\[
2m_k(X)\le B.
\tag{5.4}
\]

\end{lemma}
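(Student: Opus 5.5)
The plan is to put both sides of (5.4) in the same ``path'' form: a sum over a centre vertex $u$ and an unordered pair $\{v,w\}$ of its neighbours, weighted by the outside hafnian $H_{\{v,u,w\}}(X)$. Then I compare the two sides term by term using the row sums of $X$.

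\emph{Step 1 (rewrite $m_k$).} Apply the second formula in (3.7) to the symmetric matrix $X$. Since $C(X)_{ab}=x_{ab}^2$, each ordered triple $(v,u,w)$ contributes $x_{vu}x_{uw}^2H_{\{v,u,w\}}(X)$. Fix the middle vertex $u$ and group the two ordered triples $(v,u,w)$ and $(w,u,v)$. They share the set $\{v,u,w\}$, hence the same hafnian. Using $x_{vu}=x_{uv}$, their combined weight is
\[
x_{uv}x_{uw}\,(x_{uv}+x_{uw})\,H_{\{v,u,w\}}(X).
\]
Therefore
\[
m_k(X)=\sum_{u}\ \sum_{\{v,w\}\subseteq[n]\setminus\{u\}} x_{uv}x_{uw}(x_{uv}+x_{uw})\,H_{\{v,u,w\}}(X).
\]

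\emph{Step 2 (rewrite $B$).} Summing (5.1) over the perfect matchings $\nu$ of $[n]\setminus\{v,u,w\}$ gives exactly $x_{vu}x_{uw}H_{\{v,u,w\}}(X)$. So $B=\sum x_{vu}x_{uw}H_{\{v,u,w\}}(X)$ over distinct ordered triples. Group the triples $(v,u,w)$ and $(w,u,v)$ in the same way as in Step 1. By symmetry of $X$ the two terms are equal, so
\[
B=2\sum_{u}\ \sum_{\{v,w\}} x_{uv}x_{uw}\,H_{\{v,u,w\}}(X).
\]

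\emph{Step 3 (compare).} For distinct $v,w\neq u$, nonnegativity and the row-sum condition give
\[
x_{uv}+x_{uw}\le\sum_t x_{ut}=1.
\]
Hence each term in the expression for $m_k(X)$ is at most half the corresponding term of $B$, since all factors are nonnegative. Summing over $u$ and $\{v,w\}$ yields $m_k(X)\le B/2$, which is (5.4).

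The only delicate point is the regrouping in Step 1. One must recognise that the factor $2$ comes from the reversal $(v,u,w)\leftrightarrow(w,u,v)$ of the two-edge path. That reversal is invisible in the first formula of (3.7), where one instead compares $\sum_w x_{uw}H$ against $\sum_w x_{uw}^2H$ termwise, and that comparison only gives $B\ge m_k$. Symmetry of $X$ is used exactly here: it identifies $x_{vu}$ with $x_{uv}$, so the two orientations of the path through $u$ carry equal weight.
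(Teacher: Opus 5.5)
Your proof is correct and is essentially the paper's argument. The paper also pairs the ordered paths $(v,u,w)$ and $(w,u,v)$ at a fixed centre $u$, compares the two top completions of mass $ab(a+b)H$ with the released mass $2abH$, and uses $a+b\le 1$ from the stochastic row of $u$; the only difference is that it does this for each outside matching $\nu$ before summing, whereas you first sum over $\nu$ into $H_{\{v,u,w\}}(X)$, which changes nothing.
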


\begin{proof}
Pair the paths $(v,u,w)$ and $(w,u,v)$ while fixing the center $u$
and the outside matching $\nu$. Put

\[
a=x_{vu},\qquad b=x_{uw},\qquad
H=\prod_{\{r,s\}\in\nu}x_{rs}^2.
\]

Their combined released mass is $2abH$. Closing their terminal rows back to
$u$ produces two top maps of total mass

\[
ab^2H+a^2bH=ab(a+b)H.
\]

Since the center row is stochastic,

\[
a+b\le\sum_{z\ne u}x_{uz}=1-x_{uu}\le1.
\]

Thus twice the mass of the two top completions is at most their released
mass. Every top map has a unique tail vertex $v$, tail target $u$, mate $w$
of $u$, and outside matching $\nu$, so summing proves (5.4). Equivalently,
the slack is the manifestly nonnegative sum

\[
B-2m_k
=\sum_{u,\{v,w\},\nu}
2x_{vu}x_{uw}(1-x_{vu}-x_{uw})
\prod_{\{a,b\}\in\nu}x_{ab}^2.
\tag{5.5}
\]

Here the sum is over centers $u$, unordered endpoint pairs $\{v,w\}$
disjoint from $u$, and matchings
$\nu\in\operatorname{PM}([n]\setminus\{u,v,w\})$. The right-hand side is
nonnegative.
\end{proof}

We next classify the maps obtained when the released row is completed.

\begin{lemma}[completion identity]
There is a nonnegative map mass $O$,
disjoint from the masses $m_k$ and $Q_3$, such that

\[
B=m_k+3Q_3+O
\tag{5.6}
\]

and

\[
O\le1-m_k-Q_3.
\tag{5.7}
\]

\end{lemma}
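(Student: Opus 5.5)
The plan is to expand $B$ as a weighted sum over maps $f:[n]\to[n]$ and count how many times each map occurs. The key observation is that $X$ is symmetric, so $x_{ab}^2=x_{ab}x_{ba}$. Hence each released-path datum $(v,u,w;\nu)$, together with a choice of target $t=f(w)$ for the released row, determines a single map $f$:
\[
f(v)=u,\qquad f(u)=w,\qquad f(w)=t,\qquad f(a)=b,\ f(b)=a\quad(\{a,b\}\in\nu).
\]
Its weight $h(v,u,w;\nu)\,x_{wt}$ equals $\mu_X(f)$. Summing over $t$ recovers (5.3), because $\sum_t x_{wt}=1$. Therefore
\[
B=\sum_f N(f)\,\mu_X(f),
\]
where $N(f)$ is the number of triples $(v,u,w)$ of distinct vertices with $f(v)=u$ and $f(u)=w$, such that the complement of $\{v,u,w\}$ is a union of $2$-cycles of $f$ (these $2$-cycles then form $\nu$, which is forced). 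I would then split the sum according to where $t$ lies: $t=u$, $t=v$, $t=w$, or $t$ a vertex of $\nu$.

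\emph{Case $t=u$.} Then $u\leftrightarrow w$ is a transposition and $v$ is a tail, so $f$ is a top map. Conversely, every top map arises here, and only once. To see uniqueness, among the three vertices outside $\nu$ we need a directed path $v\to u\to w$. If $v$ lay in a transposition, then $u$ would be its mate and $f(u)=v$, contradicting distinctness. So $v$ must be the unique tail, which forces $u=f(v)$, then $w$, then $\nu$. This gives $N(f)=1$ on $\mathcal E_k$, and these maps contribute $m_k$.

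\emph{Case $t=v$.} Then $f\in\mathcal Q_3$. Conversely, each $f\in\mathcal Q_3$ arises exactly from the three cyclic rotations of its $3$-cycle, with $\nu$ the transpositions of $f$. This gives $N(f)=3$ and the term $3Q_3$.

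\emph{Remaining cases.} Either $t=w$ (so $w$ is a fixed point), or $t\in\{a,b\}$ for some $\{a,b\}\in\nu$ (so $w$ maps into a transposition). Let $\mathcal O$ be the set of maps arising in these cases, and set $O=\sum_{f\in\mathcal O}N(f)\mu_X(f)$. This gives (5.6), with $O\ge0$.

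For (5.7), I will show that $N(f)=1$ on $\mathcal O$ and that $\mathcal O$ is disjoint from $\mathcal E_k\cup\mathcal Q_3$. Granting this, (2.2) gives $m_k+Q_3+O\le\sum_f\mu_X(f)=1$.

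\emph{Disjointness.} Maps in $\mathcal O$ are not permutations: in the case $t=w$, both $u$ and $w$ map to $w$; in the case $t=a$, both $w$ and $b$ map to $a$. So they lie outside $\mathcal Q_3$. They also have at least two vertices not lying in any $2$-cycle, namely $v$ and $u$ (and also $w$ when $t\in\nu$). A top map has exactly one such vertex, so they lie outside $\mathcal E_k$.

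\emph{Uniqueness of the representation.} This is where the case analysis needs care, and I expect it to be the main obstacle. Since $\{v,u,w\}$ must be the complement of a union of $2$-cycles of $f$, it must contain every vertex that is not in a $2$-cycle of $f$.
\begin{itemize}
\item If $t\in\nu$, the vertices $v,u,w$ are all not in $2$-cycles. Thus $\{v,u,w\}$ is exactly the set of such vertices, and the order is forced by the chain $v\to u\to w$.
\item If $t=w$, the set $\{v,u,w\}$ again consists exactly of the vertices outside $2$-cycles. The ordering is forced because $w$ is the unique fixed point, $u$ is the unique other preimage of $w$, and $v$ is the remaining vertex.
\end{itemize}

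One subtlety must be checked: whether some choice in $\mathcal O$ could accidentally create a $2$-cycle among $v,u,w$, which would let a different triple also qualify. For example, $v\leftrightarrow u$ would require $f(u)=v$, but $f(u)=w\ne v$. Similarly, $u\leftrightarrow w$ would require $t=u$, which is excluded in these cases. Once this is verified, the decomposition is exhaustive and multiplicity-exact, and (5.6) and (5.7) follow.
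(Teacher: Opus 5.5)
Your proposal is correct and follows the paper's proof essentially step for step. You classify each completed released path by the target $t$ of the released row, obtain multiplicities $1$, $3$, $1$ for top maps, $\mathcal Q_3$ maps, and residual maps, and deduce (5.7) from disjointness and (2.2); your uniqueness argument via the set of vertices not lying in $2$-cycles is a rephrasing of the paper's reconstruction (the mutual pairs are exactly $\nu$, and the path $v\to u\to w$ is forced on the remaining three vertices).
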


\begin{proof}
In a released configuration (5.2), classify the completion by the
choice $w\mapsto t$.

If $t=u$, the terminal row closes the mutual pair $u\leftrightarrow w$ and
produces a top map. Its representation is unique, so every top map occurs
once.

If $t=v$, the path closes into the directed triangle
$v\to u\to w\to v$. Every map counted by $Q_3$ occurs three times, once for
each cyclic choice of where its directed triangle is opened.

Finally, suppose $t\notin\{u,v\}$, allowing either $t=w$ or $t$ in an
outside pair. Call the resulting map residual. Its mutual pairs are exactly
the pairs of $\nu$: none of $\{v,u\}$, $\{u,w\}$, or $\{w,t\}$ is mutual.
After removing those pairs, the three remaining vertices contain the unique
directed path $v\to u\to w$; the vertex $v$ is its unique vertex of
in-degree zero, and $w$ either loops or maps out to a removed pair. Hence
$v,u,w$, and $\nu$ are uniquely reconstructed from the residual map. Every
residual map therefore occurs once.

These three classes are disjoint: top maps have exactly one nonperiodic
vertex, $Q_3$ maps are permutations with a directed odd cycle, and residual
maps have at least two nonperiodic vertices. If $O$ is the total product
mass of the residual maps, the multiplicities are exactly $1,3,1$, proving
(5.6). All three classes lie inside the map space of total mass one in
(2.2), which proves (5.7).
\end{proof}

\begin{theorem}[symmetric triangle charge]
For every symmetric
$X\in\Omega_{2k+1}$,

\[
m_k(X)-Q_3(X)\le\frac12.
\tag{5.8}
\]

\end{theorem}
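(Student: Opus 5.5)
The plan is to combine the two exact ingredients of this section, Lemma 5.1 and Lemma 5.2, by elementary algebra; no further estimate should be needed. The argument runs through the released-path mass $B$ of (5.3), which we bound from below by $2m_k(X)$ and from above by an expression involving $Q_3(X)$.

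First, Lemma 5.1 gives the lower bound $2m_k \le B$, with all masses taken for the symmetric matrix $X$. Second, Lemma 5.2 gives the identity $B = m_k + 3Q_3 + O$ together with the bound $O \le 1 - m_k - Q_3$. Substituting the bound on $O$ into the identity yields
\[
B \le m_k + 3Q_3 + 1 - m_k - Q_3 = 1 + 2Q_3 .
\]
Chaining this with Lemma 5.1 gives $2m_k \le 1 + 2Q_3$. Dividing by $2$ gives $m_k - Q_3 \le \tfrac12$, which is (5.8).

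The only point that needs checking is that the hypotheses of both lemmas hold here. Lemma 5.1 uses the symmetry of $X$: the released weights (5.1) carry squared pair weights $x_{ab}^2$, which equal $C(X)_{ab}=x_{ab}x_{ba}$ only because $X$ is symmetric. It also uses row-stochasticity in the step $a+b\le 1$. Lemma 5.2 uses the fact that the top, triangle and residual classes are disjoint and all lie in the map space of total mass one from (2.2). Both conditions are satisfied by any symmetric $X\in\Omega_{2k+1}$, so there is no substantive obstacle in this step. The real difficulty lies in the completion identity, namely the unique reconstruction of residual maps, and that has already been proved.

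For the extremal analysis used later, we will also record when equality holds in (5.8). Equality forces equality in (5.5), which means that every released path with nonzero weight satisfies $x_{vu}+x_{uw}=1$. It also forces equality in (5.7), meaning that the four map classes exhaust the total mass one.
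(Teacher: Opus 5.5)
Your proof is correct and is exactly the paper's argument: chain $2m_k\le B$ from Lemma 5.1 with $B=m_k+3Q_3+O\le 1+2Q_3$ from Lemma 5.2, then divide by $2$. One small slip in your equality aside: there are three classes, not four (top, $\mathcal Q_3$, residual), and equality in (5.7) means exactly that $m_k+Q_3+O=1$.
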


\begin{proof}
Lemmas 5.1 and 5.2 give

\[
2m_k\le B=m_k+3Q_3+O
\le m_k+3Q_3+(1-m_k-Q_3)=1+2Q_3.
\]

Rearranging proves (5.8).
\end{proof}

Combining Theorems 4.2 and 5.3 yields the promised universal estimate.

\begin{corollary}[triangle-charge inequality]
For every
$A\in\Omega_{2k+1}$,

\[
\boxed{m_k(A)-Q_3(A)\le\frac12.}
\tag{5.9}
\]

\end{corollary}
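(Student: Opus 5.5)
The plan is to chain the two preceding results. Given an arbitrary $A\in\Omega_{2k+1}$, put $X=(A+A^{\mathsf T})/2$. Theorem 4.2 gives $m_k(A)-Q_3(A)\le m_k(X)-Q_3(X)$. Theorem 5.3 then gives $m_k(X)-Q_3(X)\le\frac12$, and composing the two inequalities proves (5.9).

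The only hypothesis to verify is that $X$ belongs to the class covered by Theorem 5.3, namely that $X$ is a symmetric element of $\Omega_{2k+1}$. Symmetry is immediate from the definition of $X$. Nonnegativity is inherited from $A$ and $A^{\mathsf T}$. Each row sum of $X$ is the average of a row sum of $A$ and a column sum of $A$, so it equals $1$, and the same argument applies to column sums. This is the one place where the column-stochastic condition is used, exactly as remarked at the start of the proof of Theorem 4.2.

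I expect no real obstacle here. All of the difficulty sits in the two cited results: the sign-sensitive discarding of $D_T$ in Theorem 4.2, and the completion count $B=m_k+3Q_3+O$ in Lemma 5.2. The corollary is just their composition. I would close by noting that the order $n=2k+1$ is unchanged under symmetrization, so $m_k$ and $Q_3$ refer to the same layer on both sides of the inequality.
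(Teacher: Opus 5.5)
Your proposal is correct and is exactly the paper's argument: the corollary is obtained by composing Theorem 4.2 for $X=(A+A^{\mathsf T})/2$ with Theorem 5.3, and your check that $X$ is a symmetric element of $\Omega_{2k+1}$ is the only hypothesis that needs verifying. One small overstatement: column-stochasticity is used not only there but also inside Theorem 4.2, to get $m_k(A^{\mathsf T})=m_k(A)$. This is harmless, since Theorem 4.2 is stated for all $A\in\Omega_{2k+1}$.
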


\section{Proof of the main theorem}

We now assemble the short final implication. By (4.3) and (5.9),

\[
M+m_k\le1-Q_3+m_k\le\frac32.
\tag{6.1}
\]

Lemma 3.2 therefore gives

\[
\operatorname{per}(I-A)
\le2^{k-1}(M+m_k)
\le2^{k-1}\cdot\frac32
=3\cdot2^{k-2}.
\tag{6.2}
\]

It remains to check sharpness. For the matrix $A_\star$ in (1.2), block
multiplicativity of the permanent gives

\[
\begin{aligned}
\operatorname{per}(I-A_\star)
&=\operatorname{per}\!\left(I_3-\frac12(J_3-I_3)\right)
\operatorname{per}(I_2-P_2)^{k-1}\\
&=\frac32\,2^{k-1}
=3\cdot2^{k-2}.
\end{aligned}
\tag{6.3}
\]

Thus equality is attained, proving the maximum-value part of Theorem 1.1.
The argument includes
$k=1$: empty outside matchings have weight one, $Q_3$ is then the mass of
the two oriented $3$-cycles, and (6.3) gives $3/2$. \hfill\(\square\)

At $A_\star$, the map masses are

\[
(M,m_k,Q_3)=\left(\frac34,\frac34,\frac14\right).
\tag{6.4}
\]

Consequently both the triangle-charge constant $1/2$ and the compensation
constant $3/2$ in (6.1) are sharp.

\section{Classification of equality}

We now prove the equality assertion in Theorem 1.1. Suppose that
$A\in\Omega_{2k+1}$ attains the maximum. The proof above gives the monotone
chain

\[
\begin{aligned}
\operatorname{per}(I-A)
&\le 2^{k-1}(M+m_k)\\
&\le 2^{k-1}\left(M+Q_3+\frac12\right)\\
&\le 2^{k-1}\cdot\frac32.
\end{aligned}
\tag{7.1}
\]

Equality at the two ends forces equality at every step, and in particular

\[
Q_3=1-M.
\tag{7.2}
\]

The maps counted by $M$ and $Q_3$ are disjoint subsets of the map space of
total mass one in (2.2). Since every map mass is nonnegative, (7.2) has the
following support consequence:

\begin{quote}
Every map $f$ satisfying $a_{i,f(i)}>0$ for every $i$ either has only even
cycles, or is a permutation of cycle type $(3,2,\ldots,2)$.
\end{quote}

We call this property the support dichotomy. Let $S$ be the support digraph
of $A$, with a directed edge $i\to j$ exactly when $a_{ij}>0$. Any partial
assignment of supported outgoing edges extends to a full positive-mass map:
in each unused row, simply choose any positive entry. We use this observation
repeatedly.

First, $S$ has no loop. Indeed, a map extending a supported loop has an odd
$1$-cycle, so it does not have only even cycles, and its fixed point prevents
it from having type $(3,2,\ldots,2)$. The same extension argument shows that
$S$ has no directed odd cycle of length at least five.

By the Birkhoff--von Neumann theorem, $S$ contains the edges of a
permutation. This permutation has no fixed point. Its cycle lengths sum to
the odd number $2k+1$, so one of them is odd; by the preceding paragraph it
must have length three. Thus $S$ contains a directed triangle $\gamma$ on
a vertex triple $T$.

No supported edge enters $T$ from outside. Otherwise, extend $\gamma$
together with an edge $z\to t$, where $z\notin T$ and $t\in T$. The
resulting map has the odd cycle $\gamma$, but it is not a permutation because
$t$ has two distinct preimages. This contradicts the support dichotomy. Each
column in $T$ therefore receives its full unit mass from rows in $T$. Those
three column sums exhaust the total mass of the three rows in $T$, so no
supported edge leaves $T$ either. After simultaneous relabelling,

\[
A=A_1\oplus A_2,
\qquad A_1\in\Omega_3,\quad A_2\in\Omega_{2k-2},
\tag{7.3}
\]

and both blocks have zero diagonal.

Choose arbitrarily one supported outgoing edge from each row of $A_2$ and
combine these choices with $\gamma$. The resulting map contains an odd
triangle, so the support dichotomy forces it to be a
$(3,2,\ldots,2)$-permutation. Consequently every such row choice in the
complement is a fixed-point-free involution.

If a complementary row $z$ had two supported targets $t_1\ne t_2$, choose
such an involution $g$ with $g(z)=t_1$. Then $g(t_1)=z$, and
$g(t_2)\ne z$. Changing only the choice in row $z$ from $t_1$ to $t_2$
would produce another supported row choice, but its edge $z\to t_2$ would
not be reciprocated. This contradicts the preceding universal involution
property. Hence every row of $A_2$ has exactly one positive entry. After
relabelling the complementary vertices, row stochasticity and the support
dichotomy now give

\[
A_2=P_2^{\oplus(k-1)}.
\tag{7.4}
\]

Finally, $A_1$ is a zero-diagonal $3\times3$ doubly stochastic matrix. Let
$C$ be a cyclic permutation matrix. The only fixed-point-free permutations
of three vertices are $C$ and $C^{\mathsf T}$, so Birkhoff--von Neumann gives

\[
A_1=xC+(1-x)C^{\mathsf T},
\qquad 0\le x\le1,
\tag{7.5}
\]

Direct expansion yields

\[
\operatorname{per}(I_3-A_1)
=1-x^3-(1-x)^3+3x(1-x)
=6x(1-x).
\tag{7.6}
\]

Using (7.4) and block multiplicativity,

\[
\operatorname{per}(I-A)=6x(1-x)2^{k-1}.
\tag{7.7}
\]

Equality with $3\cdot2^{k-2}$ forces $x(1-x)=1/4$, hence $x=1/2$.
Thus $A_1=\tfrac12(J_3-I_3)$, proving (1.5). The converse is (6.3).

It remains to count the distinct matrices. Choose the three vertices of the
half-weight triangle in $\binom{2k+1}{3}$ ways, then choose a perfect
matching of the other $2k-2$ vertices in $(2k-3)!!$ ways. The triangle is
symmetric, so there is no orientation factor, and distinct choices produce
distinct matrices. With the convention $(-1)!!=1$, this also covers
$k=1$. This completes the proof of Theorem 1.1. \hfill\(\square\)

\section*{Acknowledgments}

The proof of this conjecture was carried out by GPT-5.6-sol and Claude Fable
5, under the guidance of the author. The author has reviewed the resulting
proof arguments. Responsibility for the final text rests with the author.

\end{document}